\def\!{\,!\,}
\newtheorem{theorem}{Theorem}[section]
\newtheorem{proposition}[theorem]{Proposition}
\newtheorem{corollary}[theorem]{Corollary}
\theoremstyle{definition}
\newtheorem{definition}[theorem]{Definition}
\newtheorem{example}[theorem]{Example}
\theoremstyle{remark}
\newtheorem{remark}[theorem]{Remark}
\numberwithin{equation}{section}
\begin{document}
\title{A Morita equivalence for Hilbert $C^*$-modules}
\author[M. Joi\c{t}a]{Maria Joi\c{t}a}
\address{Department of Mathematics, University of Bucharest, Bd. Regina
Elisabeta nr. 4-12, Bucharest, Romania.}
\email{mjoita@fmi.unibuc.ro}
\urladdr{http://sites.google.com/a/g.unibuc.ro/maria-joita/}
\author[M. S. Moslehian]{Mohammad Sal Moslehian}
\address{Department of Pure Mathematics, Center of Excellence in Analysis on
Algebraic Structures (CEAAS), Ferdowsi University of Mashhad, P. O. Box
1159, Mashhad 91775, Iran.}
\email{moslehian@ferdowsi.um.ac.ir and moslehian@member.ams.org}
\urladdr{http://profsite.um.ac.ir/~moslehian/}
\subjclass[2010]{Primary 46L08; Secondary 46L05.}
\keywords{Hilbert $C^*$-module, Morita equivalence, Green's theorem,
continuous action, $C^*$-algebra.}

\begin{abstract}
In this paper we introduce a notion of Morita equivalence for Hilbert $C^*$%
-modules in terms of the Morita equivalence of the algebras of compact
operators on Hilbert $C^*$-modules. We investigate some properties of the
new version of Morita equivalence and obtain some results. We then applied
our results to study the continuous actions of locally compact groups on
full Hilbert $C^*$-modules. We present an extension of Green's theorem in
the context of Hilbert $C^*$-modules as well.
\end{abstract}

\maketitle

%%%%% To ease editing, for IMPAN journals add:

\baselineskip=17pt

%%%%%%%%%%%

%% In the running head, replace first names by initials
%% and give an abbreviation of the title.
%%%%%%%%%%%%%%%%%%%%%%%%%%%%%%%%%%%%%%%%%%%%%%%%%%%%%%%%%%%%%%%%%%%%%%%%%%%%%%%%%%%%%%%%%%%%%%%%%%%%%%%%%%%%%%%%%%%%%%%%%%%%%%%%%%%%%%%%%%%%%%%%%%%%%%%%%%%%%%%%%%%%%%%%%%%%%%%%%%%%%%%%%%%%%%%%%%%%%%%%%%%%%%%%%%%%%%%%%%%%%%%%%%%%%%%%%%%%%%%%%%%%%%%%%%%%

\section{Introduction}

The notion of a Hilbert $C^*$-module is a generalization of that of a
Hilbert space in which the inner product takes its values in a $C^*$-algebra
rather than in the field of complex numbers. Hilbert $C^*$-modules are
useful tools in $AW^*$-algebra theory, theory of operator algebras, operator
$K$-theory, Morita equivalence of $C^*$-algebras, group representation
theory and theory of operator spaces. The theory of Hilbert $C^*$-modules is
very interesting on its own right. If $E$ is a Hilbert $C^*$-module over a $%
C^{\ast }$-algebra ${\mathscr{A}}$, then we denote by $\mathcal{L}_{%
\mathscr{A}}(E)$ and $\mathcal{K}_{\mathscr{A}}(E)$, respectively, the $C^*$%
-algebra of all adjointable maps on $E$ and the $C^*$-algebra of all
``compact'' operators on $E$. The linking algebra associated to a Hilbert $%
C^*$-module\ $E$ over a $C^{\ast }$-algebra $\mathscr{A}$ is denoted by $%
\mathcal{L}(E).$ If $\mathscr{H}$ is an infinite-dimensional separable
Hilbert space, then $\mathbb{K}$ denotes the $C^*$-algebra of all compact
operators on $\mathscr{H}$.

The notion of (strong) Morita equivalence of $C^*$-algebras was first
introduced by Rieffel \cite{RIE}. Two $C^*$-algebras $\mathscr{A}$ and $%
\mathscr{B}$ are said to be Morita equivalent if there exists an $\mathscr{A}%
-\mathscr{B}$-imprimitivity bimodule, or equivalently, there exists a full
Hilbert $\mathscr{B}$-module $E$ such that $\mathscr{A}$ is isomorphic to
the $C^*$-algebra $\mathcal{K}_{\mathscr{B}}(E)$. There are other equivalent
definitions of Morita equivalence in the literature, see \cite{RW}. This
concept is weaker than the notion of $C^*$-isomorphism. It is an interesting
problem to study the properties of $C^*$-algebras being preserved under the
Morita equivalence. It is known that the Morita equivalence preserves $K$%
-theory and $K$-homology and several properties of $C^*$-algebras such as
type I-ness \cite{RIE}, nuclearity \cite{BEE} and simplicity \cite{RW}. Two
unital $C^*$-algebras are Morita equivalent if and only if they are Morita
equivalent as rings, cf. \cite{BEE}. Also two $C^*$-algebras are Morita
equivalent if and only if their minimal dense ideals are Morita equivalent,
cf. \cite{ARA}. A foundation of Morita equivalence theory for operator
algebras is established in \cite{BMP}. Muhly and Solel \cite{MS} defined a
notion of Morita equivalence for $C^*$-correspondences.

A Hilbert $\mathscr{A}$-module $E$ and a Hilbert $\mathscr{B}$-module $F$
are Morita equivalent in the sense of Skeide \cite{SKE11} if there exists a
Morita equivalence $M$ from $\mathscr{A}$ to $\mathscr{B}$ such that $%
E\otimes M=F$ (or $E=F\otimes M^*$). In \cite[Definition 3.4]{SKE1}, two
Hilbert $C^*$-module $E$ and $F$ are called stably Morita equivalent if $%
E\otimes \mathscr{H}$ and $F\otimes \mathscr{H}$ are Morita equivalent,
where $\mathscr{H}$ denotes any infinite-dimensional separable Hilbert
space. Two full Hilbert $C^*$-modules $E$ and $F$ are Morita equivalent in
the sense of Skeide if and only if the $C^*$-algebras $\mathcal{L}_{%
\mathscr{A}}(E)$ and $\mathcal{L}_{\mathscr{B}}(F)$ are bistrictly
isomorphic, and this if and only if the $C^*$-algebras $\mathcal{K}_{%
\mathscr{A}}(E)$ and $\mathcal{K}_{\mathscr{B}}(F)$ are isomorphic \cite[%
Corollaries 2.13, 2.14 and 2.16]{SKE1}.

If two $C^*$-algebras $\mathscr{A}$ and $\mathscr{B}$ are Morita equivalent
as Hilbert $C^*$-modules, then $\mathscr{A}$ and $\mathscr{B}$ are Morita
equivalent as $C^*$-algebras (in fact, $\mathscr{A}$ and $\mathscr{B}$ are
isomorphic). The converse implication is not true. So the notion of Morita
equivalence introduced by Skeide is stronger than the notion of Morita
equivalence by Rieffel.

In this paper we introduce a notion of Morita equivalence for Hilbert $C^*$%
-modules. It is defined as Morita equivalence of the algebras of compact
operators on Hilbert $C^*$-modules. This notion is weaker than that of
Skeide but under some ``standing'' countability hypotheses ($\sigma $-unital
$C^*$-algebras and countably generated modules) our definition coincides
with Skeide's definition of stable Morita equivalence. We investigate some
properties of the new version of Morita equivalence and obtain some results.
We then applied our results to study the continuous actions of locally
compact groups on full Hilbert $C^*$-modules. We present an extension of
Green's theorem in the context of Hilbert $C^*$-modules as well.

\section{Results}

We start this section with the following essential definition.

\begin{definition}
Two Hilbert $C^*$-modules $E$ and $F$, respectively, over $C^*$-algebras $%
\mathscr{A}$ and $\mathscr{B}$ are Morita equivalent, denoted by $%
E\backsim_{M} F$, if the $C^*$-algebras $\mathcal{K}_{\mathscr{A}}(E)$ and $%
\mathcal{K}_{\mathscr{B}}(F)$ are Morita equivalent.
\end{definition}

It is well known that any $C^*$-algebra $\mathscr{A}$ is a Hilbert $%
\mathscr{A}$-module in a natural way and the $C^*$-algebras $\mathscr{A}$
and $\mathcal{K}_{\mathscr{A}}(\mathscr{A})$ are isomorphic and so they are
Morita equivalent. Therefore, two $C^{*}$-algebras $\mathscr{A}$ and $%
\mathscr{B}$ are Morita equivalent as Hilbert $C^*$-modules if and only if
they are Morita equivalent as $C^{*}$-algebras.

Since the Morita equivalence of $C^*$-algebras is an equivalence relation,
the Morita equivalence $\backsim_M$ on Hilbert $C^*$-modules is an
equivalence relation too.

\begin{example}
Let $\mathscr{H}$ be a separable infinite dimensional Hilbert space. Then $%
\mathscr{H}$ $\backsim _{M}\mathbb{C}$ $\backsim _{M}\mathbb{K}$ as Hilbert $%
C^*$-modules, since the $C^*$-algebras $\mathbb{C}$ and $\mathbb{K}$ are
Morita equivalent.
\end{example}

A \textit{morphism of Hilbert $C^*$-modules} from a Hilbert $C^*$-module $E$
over $\mathscr{A}$ to a Hilbert $C^*$-module $F$ over $\mathscr{B}$ is a map
$\Phi :E\to F$ with the property that there is a $C^*$-morphism $\varphi :%
\mathscr{A}\to \mathscr{B}$ such that
\begin{equation*}
\left\langle \Phi \left( \xi _{1}\right),\Phi \left( \xi _{2}\right)
\right\rangle =\varphi \left( \left\langle \xi _{1},\xi _{2}\right\rangle
\right)
\end{equation*}%
for all $\xi _{1},\xi _{2}\in E$. Two Hilbert $C^*$-modules $E$ and $F$,
respectively, over $C^*$-algebras $\mathscr{A}$ and $\mathscr{B}$ are
\textit{isomorphic} if there is a bijective map $\Phi :E\to F$ such that $%
\Phi$ and $\Phi ^{-1}$ are morphisms of Hilbert $C^*$-modules.

\begin{proposition}
Let $E$ and $F$ be two Hilbert $C^*$-modules over $C^{\ast }$-algebras $%
\mathscr{A}$ and $\mathscr{B}$, respectively. If $E$ and $F$ are isomorphic,
then $E\backsim _{M}F$.
\end{proposition}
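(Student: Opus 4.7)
The plan is to show that any isomorphism $\Phi : E \to F$ of Hilbert $C^*$-modules transports $\mathcal{K}_{\mathscr{A}}(E)$ onto $\mathcal{K}_{\mathscr{B}}(F)$ as $C^*$-algebras, and then to note that any two isomorphic $C^*$-algebras are Morita equivalent (since every $C^*$-algebra $\mathscr{C}$ is Morita equivalent to itself via $\mathscr{C}$ viewed as a Hilbert $\mathscr{C}$-module, as the paper has already observed).

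Let $\varphi : \mathscr{A} \to \mathscr{B}$ be the $C^*$-morphism intertwining the inner products via $\Phi$, and let $\psi : \mathscr{B} \to \mathscr{A}$ play the same role for $\Phi^{-1}$. Two preparatory facts will be needed. First, since $\psi(\varphi(\langle \xi_1,\xi_2\rangle))=\langle \xi_1,\xi_2\rangle$ for all $\xi_1,\xi_2\in E$, the map $\varphi$ is isometric on the ideal $\langle E,E\rangle$, and symmetrically for $\psi$; in particular $\Phi$ itself is an isometry. Second, expanding
\begin{equation*}
\bigl\langle \Phi(\xi\cdot a)-\Phi(\xi)\cdot\varphi(a),\, \Phi(\xi\cdot a)-\Phi(\xi)\cdot\varphi(a)\bigr\rangle
\end{equation*}
and using that $\varphi$ is a $*$-homomorphism, every term cancels, yielding
\begin{equation*}
\Phi(\xi\cdot a)=\Phi(\xi)\cdot\varphi(a), \qquad \xi\in E,\ a\in\mathscr{A}.
\end{equation*}

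With these in hand, I would define $\widetilde{\Phi}:\mathcal{K}_{\mathscr{A}}(E)\to\mathcal{K}_{\mathscr{B}}(F)$ by $\widetilde{\Phi}(T)=\Phi\circ T\circ\Phi^{-1}$. A direct computation on a rank-one operator $\theta_{\xi_1,\xi_2}(\xi)=\xi_1\langle\xi_2,\xi\rangle$, using the two preparatory facts together with $\Phi\circ\Phi^{-1}=\mathrm{id}_F$, gives
\begin{equation*}
\widetilde{\Phi}(\theta_{\xi_1,\xi_2})=\theta_{\Phi(\xi_1),\Phi(\xi_2)}.
\end{equation*}
Hence $\widetilde{\Phi}$ sends the dense $*$-subalgebra of finite-rank operators into its counterpart in $\mathcal{K}_{\mathscr{B}}(F)$, and is visibly multiplicative, $*$-preserving, and isometric (because $\Phi$ is). Extending by continuity, $\widetilde{\Phi}$ is a $*$-isomorphism onto $\mathcal{K}_{\mathscr{B}}(F)$, with inverse built analogously from $\Phi^{-1}$.

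It then follows that $\mathcal{K}_{\mathscr{A}}(E)$ and $\mathcal{K}_{\mathscr{B}}(F)$ are Morita equivalent, which by definition means $E\backsim_M F$. The only delicate step is the module-compatibility identity $\Phi(\xi\cdot a)=\Phi(\xi)\cdot\varphi(a)$: it is not assumed in the definition of a morphism of Hilbert $C^*$-modules but must be extracted from the inner-product condition, and without it one cannot conclude that $\Phi T\Phi^{-1}$ is $\mathscr{B}$-linear, let alone compact. Everything else reduces to a routine continuous extension from the rank-one operators.
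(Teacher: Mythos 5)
Your proof is correct and follows the same route as the paper: show that an isomorphism of Hilbert $C^*$-modules induces a $*$-isomorphism $\mathcal{K}_{\mathscr{A}}(E)\cong\mathcal{K}_{\mathscr{B}}(F)$, then use that isomorphic $C^*$-algebras are Morita equivalent. The only difference is that the paper simply cites this induced isomorphism from Baki\'{c}--Gulj\u{a}s (Proposition 2.11 of their paper), whereas you prove it directly via the module-compatibility identity and conjugation on rank-one operators.
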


\begin{proof}
Since the Hilbert $C^*$-modules $E$ and $F$ are isomorphic, the $C^*$%
-algebras $\mathcal{K}_{\mathscr{A}}(E)$ and $\mathcal{K}_{\mathscr{B}}(F)$
are isomorphic \cite[Proposition 2.11]{1}, and so they are Morita
equivalent. Therefore $E\backsim _{M}F$.
\end{proof}

Given two Hilbert $C^*$-modules $E$ and $F$ over $C^*$-algebras $\mathscr{A}$
and $\mathscr{B}$, respectively, recall that the \textit{exterior tensor
product} $E\otimes F$ of $E$ and $F$ is a Hilbert $C^*$-module over the
injective tensor product $\mathscr{A}$ $\otimes $ $\mathscr{B}$ of $%
\mathscr{A}$ and $\mathscr{B}$ (see \cite{5}).

\begin{proposition}
Let $E_{1},E_{2},F_{1}$ and $F_{2}$ be four Hilbert $C^*$-modules over $C^*$%
-algebras $\mathscr{A}_{1,}\mathscr{A}_{2},\mathscr{B}_{1}$ and $\mathscr{B}%
_{2}$, respectively. If $E_{1}\backsim _{M}F_{1}$ and $E_{2}\backsim
_{M}F_{2}$, then $E_{1}\otimes E_{2}\backsim _{M}F_{1}\otimes F_{2}$.
\end{proposition}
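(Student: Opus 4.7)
The plan is to reduce the statement to two standard ingredients: (i) the identification of the compact operators on an exterior tensor product with the (spatial) tensor product of the compact operators on the factors, and (ii) the fact that Morita equivalence of $C^*$-algebras is preserved under exterior tensor products.

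First, I would invoke the canonical $C^*$-isomorphism
\[
\mathcal{K}_{\mathscr{A}_{1}\otimes \mathscr{A}_{2}}(E_{1}\otimes E_{2})\;\cong\;\mathcal{K}_{\mathscr{A}_{1}}(E_{1})\otimes \mathcal{K}_{\mathscr{A}_{2}}(E_{2}),
\]
together with its analogue for $F_{1}\otimes F_{2}$. This is a standard property of exterior tensor products of Hilbert $C^{*}$-modules (see \cite{5}); on rank-one operators it is induced by $\theta_{\xi_{1},\eta_{1}}\otimes \theta_{\xi_{2},\eta_{2}}\mapsto \theta_{\xi_{1}\otimes \xi_{2},\,\eta_{1}\otimes \eta_{2}}$, and extends by linearity and continuity using the universal property of the injective tensor product.

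Next, the hypotheses $E_{i}\backsim_{M}F_{i}$ give, for $i=1,2$, a $\mathcal{K}_{\mathscr{A}_{i}}(E_{i})$--$\mathcal{K}_{\mathscr{B}_{i}}(F_{i})$-imprimitivity bimodule $M_{i}$. I would then form the exterior tensor product $M_{1}\otimes M_{2}$, equipped with the natural left and right actions and with the $C^{*}$-valued inner products obtained by tensoring those of $M_{1}$ and $M_{2}$. A standard check (cf.\ \cite{RW}) shows that $M_{1}\otimes M_{2}$ is an imprimitivity bimodule between $\mathcal{K}_{\mathscr{A}_{1}}(E_{1})\otimes \mathcal{K}_{\mathscr{A}_{2}}(E_{2})$ and $\mathcal{K}_{\mathscr{B}_{1}}(F_{1})\otimes \mathcal{K}_{\mathscr{B}_{2}}(F_{2})$, so Morita equivalence of $C^{*}$-algebras is stable under spatial tensor products.

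Combining these two steps, the identifications from the first step transport $M_{1}\otimes M_{2}$ to a Morita equivalence between $\mathcal{K}_{\mathscr{A}_{1}\otimes \mathscr{A}_{2}}(E_{1}\otimes E_{2})$ and $\mathcal{K}_{\mathscr{B}_{1}\otimes \mathscr{B}_{2}}(F_{1}\otimes F_{2})$, which by the definition above is exactly the desired relation $E_{1}\otimes E_{2}\backsim_{M}F_{1}\otimes F_{2}$. The only delicate point I anticipate is the compatibility between the various tensor products involved: the exterior tensor product of Hilbert modules uses the injective/spatial $C^{*}$-tensor product, and the same spatial tensor product must be used when forming $M_{1}\otimes M_{2}$ so that the two inner products are well defined and positive. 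This is a routine verification, so beyond it the argument is essentially bookkeeping.
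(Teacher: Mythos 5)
Your proof is correct and takes essentially the same route as the paper: both arguments rest on the canonical isomorphism $\mathcal{K}_{\mathscr{A}_{1}\otimes \mathscr{A}_{2}}(E_{1}\otimes E_{2})\cong \mathcal{K}_{\mathscr{A}_{1}}(E_{1})\otimes \mathcal{K}_{\mathscr{A}_{2}}(E_{2})$ together with the stability of Morita equivalence under spatial tensor products. The only difference is that the paper simply asserts the latter fact, while you justify it by tensoring the two imprimitivity bimodules, which is the standard argument and a welcome extra detail.
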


\begin{proof}
From $E_{i}\backsim _{M}F_{i},$ $i=1,2$, we have $\mathcal{K}_{\mathscr{A}%
_{i}}(E_{i})$ $\backsim _{M}\mathcal{K}_{\mathscr{B}_{i}}(F_{i})\,\,(i=1,2)$%
, and then $\mathcal{K}_{\mathscr{A}_{1}}(E_{1})\otimes $ $\mathcal{K}_{%
\mathscr{A}_{2}}(E_{2})\backsim _{M}\mathcal{K}_{\mathscr{B}%
_{1}}(F_{1})\otimes \mathcal{K}_{\mathscr{B}_{2}}(F_{2})$. But the $C^*$%
-algebras $\mathcal{K}_{\mathscr{A}_{1}}(E_{1})\otimes $ $\mathcal{K}_{%
\mathscr{A}_{2}}(E_{2})$ and $\mathcal{K}_{\mathscr{A}_{1}\otimes \mathscr{A}%
_{2}}(E_{1}\otimes E_{2})$ are isomorphic as well as the $C^*$-algebras $%
\mathcal{K}_{\mathscr{B}_{1}}(F_{1})\otimes $ $\mathcal{K}_{\mathscr{B}%
_{2}}(F_{2})$ and $\mathcal{K}_{\mathscr{B}_{1}\otimes \mathscr{B}%
_{2}}(F_{1}\otimes F_{2})$ (see, for example, \cite[p. 57]{5} ). Therefore $%
\mathcal{K}_{\mathscr{A}_{1}\otimes \mathscr{A}_{2}}(E_{1}\otimes
E_{2})\backsim _{M}\mathcal{K}_{\mathscr{B}_{1}\otimes \mathscr{B}%
_{2}}(F_{1}\otimes F_{2})$.
\end{proof}

\begin{corollary}
Let $E$ be a Hilbert $C^*$-module. Then $E\backsim _{M}E\otimes\mathscr{H}%
\backsim _{M}E\otimes \mathbb{K}$.
\end{corollary}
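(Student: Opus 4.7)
The plan is to deduce the corollary directly from the Proposition on exterior tensor products together with the Example $\mathbb{C} \backsim_M \mathscr{H} \backsim_M \mathbb{K}$, using that $\backsim_M$ is an equivalence relation. The only auxiliary fact I need is the canonical identification of $E \otimes \mathbb{C}$ with $E$ as Hilbert $C^*$-modules (when $\mathbb{C}$ is viewed as a Hilbert $\mathbb{C}$-module in the natural way, the exterior tensor product $E \otimes \mathbb{C}$ is a Hilbert module over $\mathscr{A} \otimes \mathbb{C} \cong \mathscr{A}$, and the map $\xi \otimes \lambda \mapsto \lambda \xi$ extends to a bijective isomorphism of Hilbert $C^*$-modules).

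First I would invoke reflexivity to note that $E \backsim_M E$, and recall from the Example that $\mathbb{C} \backsim_M \mathscr{H}$ and $\mathbb{C} \backsim_M \mathbb{K}$. Second, I would apply the preceding Proposition on exterior tensor products to conclude that
\begin{equation*}
E \otimes \mathbb{C} \backsim_M E \otimes \mathscr{H} \quad \text{and} \quad E \otimes \mathbb{C} \backsim_M E \otimes \mathbb{K}.
\end{equation*}
Third, I would use the identification $E \otimes \mathbb{C} \cong E$, which via Proposition 2.3 (isomorphic Hilbert $C^*$-modules are Morita equivalent) yields $E \backsim_M E \otimes \mathbb{C}$. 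Finally, transitivity of $\backsim_M$ (established right after the Definition) strings these together to obtain
\begin{equation*}
E \backsim_M E \otimes \mathscr{H} \backsim_M E \otimes \mathbb{K}.
\end{equation*}

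The argument is essentially a one-line consequence of the tensor-product Proposition applied to the Example, so there is no real obstacle. The only point demanding mild care is the justification that $E \otimes \mathbb{C}$ is isomorphic (as a Hilbert $C^*$-module) to $E$, since the exterior tensor product is formally a module over $\mathscr{A} \otimes \mathbb{C}$ rather than over $\mathscr{A}$; this identification is standard, and once it is made explicit the rest is pure bookkeeping with the equivalence relation.
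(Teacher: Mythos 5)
Your proposal is correct and follows exactly the paper's own argument: apply the exterior tensor product Proposition to $E \backsim_M E$ and $\mathbb{C} \backsim_M \mathscr{H} \backsim_M \mathbb{K}$ from the Example, then use the isomorphism $E \cong E \otimes \mathbb{C}$ together with Proposition 3 and transitivity. The extra care you take in justifying the identification $E \otimes \mathbb{C} \cong E$ is a welcome addition, but the route is the same.
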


\begin{proof}
From Example 2 and Proposition 4, we have $E\otimes \mathbb{C}\backsim
_{M}E\otimes \mathscr{H}\backsim _{M}E\otimes \mathbb{K}$.  Due to the
Hilbert $C^{\ast }$-modules $E$ and $E\otimes \mathbb{C}$ are isomorphic we
have $E\backsim _{M}E\otimes \mathbb{C}$.
\end{proof}

\begin{corollary}
Let $E$ and $F$ be two Hilbert $C^*$-modules over $C^{\ast }$-algebras $%
\mathscr{A}$ and $\mathscr{B}$, respectively. If the Hilbert $C^*$-modules $%
E\otimes \mathscr{H}$ and $F\otimes \mathscr{H}$ are isomorphic for some
separable Hilbert space $\mathscr{H}$, then $E\backsim _{M}F$.
\end{corollary}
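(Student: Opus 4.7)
The plan is to chain three Morita equivalences via transitivity:
\[
E \;\backsim_{M}\; E\otimes \mathscr{H} \;\backsim_{M}\; F\otimes \mathscr{H} \;\backsim_{M}\; F.
\]
The middle link is immediate: by hypothesis $E\otimes \mathscr{H}$ and $F\otimes \mathscr{H}$ are isomorphic as Hilbert $C^{*}$-modules, so Proposition 3 delivers the Morita equivalence.

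For the outer two links, I would argue that $E \backsim_{M} E\otimes \mathscr{H}$ (and symmetrically for $F$). If $\mathscr{H}$ is infinite-dimensional and separable, this is exactly Corollary 5. In the remaining case $\mathscr{H}$ is finite-dimensional, and then $\mathcal{K}(\mathscr{H})$ is a finite matrix algebra over $\mathbb{C}$, hence Morita equivalent to $\mathbb{C}$, so one still has $\mathscr{H}\backsim_{M}\mathbb{C}$ as in Example 2. In either case, Proposition 4 then yields $E\otimes \mathscr{H} \backsim_{M} E\otimes \mathbb{C}$, and since the canonical map $E \to E\otimes \mathbb{C}$ is an isomorphism of Hilbert $C^{*}$-modules, a further application of Proposition 3 closes the link.

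Because $\backsim_{M}$ is an equivalence relation (as noted just after Definition 1), stringing the three links together gives $E\backsim_{M}F$. I do not anticipate any genuine obstacle: the corollary is essentially a packaging of Proposition 3 together with the stability statement of Corollary 5, and the only point worth flagging is the minor bookkeeping needed so that the argument also covers a finite-dimensional $\mathscr{H}$.
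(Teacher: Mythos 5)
Your proof is correct and follows essentially the same route as the paper: apply Proposition 3 to the hypothesis to get $E\otimes\mathscr{H}\backsim_{M}F\otimes\mathscr{H}$, then use $E\backsim_{M}E\otimes\mathscr{H}$ and $F\backsim_{M}F\otimes\mathscr{H}$ (Corollary 5) and transitivity. Your extra remark covering a finite-dimensional $\mathscr{H}$ is a small improvement, since Corollary 5 as stated concerns the fixed infinite-dimensional $\mathscr{H}$ while the corollary's hypothesis allows any separable Hilbert space.
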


\begin{proof}
If the Hilbert $C^*$-modules $E\otimes \mathscr{H}$ and $F\otimes\mathscr{H}$
are isomorphic, then, by Proposition 3, $E\otimes \mathscr{H}\backsim
_{M}F\otimes \mathscr{H}$. By Corollary 5, $E\backsim _{M}E\otimes\mathscr{H}
$ and $F\backsim _{M}F\otimes \mathscr{H}$. Hence $E\backsim _{M}F$.
\end{proof}

Let $E$ and $F$ be two Hilbert $C^*$-modules, respectively, over $C^{\ast }$%
-algebras $\mathscr{A}$ and $\mathscr{B}$ and let $\Phi: \mathscr{A}\to
\mathcal{K}_{\mathscr{B}}(F)$ be a $C^*$-morphism. Recall that the \textit{%
inner tensor product} $E\otimes _{\Phi }F$ of $E$ and $F$ corresponding to $%
\Phi $ is a Hilbert $C^*$-module over $\mathscr{B}$ (see \cite{5}).

\begin{proposition}
Let $E_{1},E_{2},F_{1}$ and $F_{2}$ be four Hilbert $C^*$-modules over $C^*$%
-algebras $\mathscr{A}_{1,}\mathscr{A}_{2},\mathscr{B}_{1}$ and $\mathscr{B}%
_{2}$, respectively. If $\Phi _{i}:\mathscr{A}_{i}\to \mathcal{K}_{%
\mathscr{B}_{i}}(F_{i})$\,\,($i=1,2$) are $C^*$-isomorphisms and $%
E_{1}\backsim _{M}E_{2}$, then $E_{1}\otimes _{\Phi _{1}}F_{1}\backsim
_{M}E_{2}\otimes _{\Phi _{2}}F_{2}$.
\end{proposition}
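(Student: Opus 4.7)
My plan is to reduce the problem to the hypothesis $E_1 \backsim_M E_2$ by showing that passing to the inner tensor product leaves the algebra of compact operators unchanged up to $C^*$-isomorphism; the isomorphism hypothesis on $\Phi_i$ (and not merely the fact that $\Phi_i$ is a $C^*$-morphism) is precisely what enables this reduction.

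The key step is to establish the $C^*$-isomorphism
\[
\mathcal{K}_{\mathscr{B}_i}(E_i \otimes_{\Phi_i} F_i) \cong \mathcal{K}_{\mathscr{A}_i}(E_i) \qquad (i = 1, 2),
\]
implemented by the natural map $\Psi_i : T \mapsto T \otimes 1_{F_i}$. Standard theory of inner tensor products (cf.\ \cite{5}) gives that $\Psi_i$ is a $*$-homomorphism from $\mathcal{L}_{\mathscr{A}_i}(E_i)$ into $\mathcal{L}_{\mathscr{B}_i}(E_i \otimes_{\Phi_i} F_i)$, and its injectivity on $\mathcal{K}_{\mathscr{A}_i}(E_i)$ reduces to injectivity of $\Phi_i$. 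For surjectivity onto compacts, a direct computation using the defining inner product $\langle e_1 \otimes f_1, e_2 \otimes f_2 \rangle = \langle f_1, \Phi_i(\langle e_1, e_2\rangle) f_2 \rangle$ and the balancing identity $e \cdot a \otimes f = e \otimes \Phi_i(a) f$ shows that, for all $e, e' \in E_i$ and $f, f' \in F_i$,
\[
\theta_{e \otimes f,\, e' \otimes f'} \;=\; \Psi_i\bigl(\theta_{e,\, e' \cdot \Phi_i^{-1}(\theta_{f', f})}\bigr),
\]
which makes sense precisely because $\Phi_i$ is an isomorphism onto $\mathcal{K}_{\mathscr{B}_i}(F_i)$, so $\Phi_i^{-1}$ is available on the rank-one operators. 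Since such operators norm-generate $\mathcal{K}_{\mathscr{B}_i}(E_i \otimes_{\Phi_i} F_i)$, this delivers the surjectivity.

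Once the key isomorphism is in hand, the hypothesis $E_1 \backsim_M E_2$ says by definition that $\mathcal{K}_{\mathscr{A}_1}(E_1)$ and $\mathcal{K}_{\mathscr{A}_2}(E_2)$ are Morita equivalent as $C^*$-algebras. Transporting along the two isomorphisms and invoking transitivity of Morita equivalence yields
\[
\mathcal{K}_{\mathscr{B}_1}(E_1 \otimes_{\Phi_1} F_1) \backsim_M \mathcal{K}_{\mathscr{B}_2}(E_2 \otimes_{\Phi_2} F_2),
\]
which is exactly the desired conclusion $E_1 \otimes_{\Phi_1} F_1 \backsim_M E_2 \otimes_{\Phi_2} F_2$.

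The main obstacle is completing the key step, specifically confirming that $\Psi_i$ lands in $\mathcal{K}$ rather than merely in $\mathcal{L}$. For a rank-one $\theta_{e_1, e_2} \in \mathcal{K}_{\mathscr{A}_i}(E_i)$, the plan is to absorb an approximate unit $\{u_\lambda\}$ of $\mathscr{A}_i$ into the second argument, use $\|e_2 u_\lambda - e_2\| \to 0$ together with the contractivity of $\Psi_i$ to get $\Psi_i(\theta_{e_1, e_2 u_\lambda}) \to \Psi_i(\theta_{e_1, e_2})$ in norm, and then approximate $\Phi_i(u_\lambda)$ by finite sums $\sum_k \theta_{f_k, g_k}$ of rank-one operators on $F_i$, which is possible because $\Phi_i(u_\lambda) \in \mathcal{K}_{\mathscr{B}_i}(F_i)$. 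The displayed identity above then writes each $\Psi_i(\theta_{e_1, e_2 u_\lambda})$ as a norm-limit of finite-rank operators on $E_i \otimes_{\Phi_i} F_i$. This is the point at which the surjectivity of $\Phi_i$ onto $\mathcal{K}_{\mathscr{B}_i}(F_i)$ is indispensable; for a general $C^*$-morphism $\Phi_i$ one only obtains $\Psi_i(\mathcal{K}_{\mathscr{A}_i}(E_i)) \subseteq \mathcal{L}_{\mathscr{B}_i}(E_i \otimes_{\Phi_i} F_i)$ and the argument breaks down.
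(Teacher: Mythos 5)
Your proof is correct and follows essentially the same route as the paper: both arguments rest on the isomorphism $\mathcal{K}_{\mathscr{A}_i}(E_i)\cong\mathcal{K}_{\mathscr{B}_i}(E_i\otimes_{\Phi_i}F_i)$ induced by $T\mapsto T\otimes 1$, which gives $E_i\backsim_M E_i\otimes_{\Phi_i}F_i$, and then conclude by transitivity of $\backsim_M$. The only difference is that the paper simply cites Lance's Proposition 4.7 for this key isomorphism, whereas you prove it by hand (correctly, including the identity $\theta_{e\otimes f,\,e'\otimes f'}=\Psi_i(\theta_{e,\,e'\cdot\Phi_i^{-1}(\theta_{f',f})})$ and the approximate-unit argument showing $\Psi_i$ lands in the compacts).
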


\begin{proof}
By \cite[Proposition 4.7]{5}, the $C^{\ast }$-algebras $\mathcal{K}_{%
\mathscr{A}_{i}}(E_{i})$ and $\mathcal{K}_{\mathscr{B}_{i}}(E_{i}\otimes
_{\Phi _{i}}F_{i})$ are isomorphic for $i=1,2$, and then $E_{i}\backsim
_{M}E_{i}\otimes _{\Phi _{i}}F_{i}$ for $i=1,2$. It follows from $%
E_{1}\backsim _{M}E_{2}$ that $E_{1}\otimes _{\Phi _{1}}F_{1}\backsim
_{M}E_{2}\otimes _{\Phi _{2}}F_{2}$.
\end{proof}

\begin{proposition}
Let $E$ and $F$ be two full Hilbert $C^*$-modules over $C^*$-algebras $%
\mathscr{A}$ and $\mathscr{B}$, respectively. Then the following assertions
are equivalent:

\begin{enumerate}
\item $E\backsim _{M}F;$

\item $\mathcal{L}(E)\backsim _{M}\mathcal{L}(F);$

\item $\mathscr{A}\backsim _{M}\mathscr{B}$.
\end{enumerate}
\end{proposition}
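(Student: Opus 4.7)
The plan is to establish the single key link $\mathscr{A} \sim_M \mathcal{K}_{\mathscr{A}}(E) \sim_M \mathcal{L}(E)$ (and analogously for $F$), after which all three equivalences in the statement follow by transitivity of Morita equivalence among $C^*$-algebras, which is what I would lean on throughout.

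For the implication (1) $\Leftrightarrow$ (3), I would use the classical observation that if $E$ is a full Hilbert $\mathscr{A}$-module, then $E$ itself is a $\mathcal{K}_{\mathscr{A}}(E)$--$\mathscr{A}$-imprimitivity bimodule: the left $\mathcal{K}_{\mathscr{A}}(E)$-valued inner product is $_{\mathcal{K}}\langle \xi,\eta\rangle = \theta_{\xi,\eta}$ (the rank-one operator $\zeta \mapsto \xi \langle \eta,\zeta\rangle$) and fullness of $E$ ensures density of the span of the right inner products. Hence $\mathcal{K}_{\mathscr{A}}(E) \sim_M \mathscr{A}$ and similarly $\mathcal{K}_{\mathscr{B}}(F) \sim_M \mathscr{B}$. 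Combining with the definition of $\sim_M$ for Hilbert $C^*$-modules, transitivity of Rieffel equivalence yields (1) $\Leftrightarrow$ (3).

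For (2) $\Leftrightarrow$ (3), I would appeal to the linking algebra picture. Writing $\mathcal{L}(E)$ as the $2\times 2$ algebra with entries $\mathcal{K}_{\mathscr{A}}(E)$, $E$, $E^{*}$, $\mathscr{A}$ on the diagonal and off-diagonal, the projections $p$ and $1-p$ in the multiplier algebra $M(\mathcal{L}(E))$ corresponding to the two diagonal corners satisfy $p\mathcal{L}(E)p \cong \mathcal{K}_{\mathscr{A}}(E)$ and $(1-p)\mathcal{L}(E)(1-p) \cong \mathscr{A}$; because $E$ is full, both corners are \emph{full} in the sense that $\mathcal{L}(E)p\mathcal{L}(E)$ and $\mathcal{L}(E)(1-p)\mathcal{L}(E)$ are dense two-sided ideals of $\mathcal{L}(E)$. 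A full corner is Morita equivalent to the ambient $C^*$-algebra, so $\mathcal{L}(E) \sim_M \mathscr{A}$ and, symmetrically, $\mathcal{L}(F) \sim_M \mathscr{B}$. Transitivity then gives (2) $\Leftrightarrow$ (3).

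The only genuine step is verifying that fullness of $E$ translates into fullness of the two corners of $\mathcal{L}(E)$, and this is the place where the hypothesis $\overline{\mathrm{span}}\langle E,E\rangle = \mathscr{A}$ is essentially used; the rest of the argument is bookkeeping plus the basic facts that isomorphic $C^*$-algebras are Morita equivalent and that Rieffel equivalence is transitive. I do not foresee a substantial obstacle beyond making these identifications precise.
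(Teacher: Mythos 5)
Your proof is correct and follows essentially the same route as the paper's: both arguments reduce the statement to the chain $\mathscr{A}\backsim_{M}\mathcal{K}_{\mathscr{A}}(E)\backsim_{M}\mathcal{L}(E)$ (via the imprimitivity bimodule / full-corner picture, which the paper delegates to Brown--Green--Rieffel) and then invoke transitivity. You merely spell out the corner and bimodule identifications that the paper leaves to the citations.
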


\begin{proof}
$\left( 1\right) \Leftrightarrow \left( 2\right) $ Since $E$ and $F$ are
full, $\mathcal{K}_{\mathscr{A}}(E)\backsim _{M}\mathcal{L}(E)$ and $%
\mathcal{K}_{\mathscr{B}}(F)\backsim _{M}\mathcal{L}(F)$. Therefore, $%
E\backsim _{M}F$ if and only if $\mathcal{L}(E)\backsim _{M}\mathcal{L}(F)$.%
\newline
For $\left( 2\right) \Leftrightarrow \left( 3\right)$ see \cite[Theorem 1.1]%
{2}.
\end{proof}

\begin{corollary}
Let $E$ be a full Hilbert $\mathscr{A}$-module. Then $E\backsim _{M}E\oplus %
\mathscr{A}\backsim _{M}\mathscr{A}\otimes \mathscr{H}\backsim _{M}E\otimes %
\mathscr{H}.$
\end{corollary}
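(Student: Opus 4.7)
The plan is to notice that all four Hilbert modules appearing in the chain are full Hilbert $\mathscr{A}$-modules, whereupon Proposition 8, equivalence $(1) \Leftrightarrow (3)$, collapses each Morita equivalence in the chain to the trivial statement $\mathscr{A} \backsim_M \mathscr{A}$. Thus the proof reduces to checking fullness of $E \oplus \mathscr{A}$, of $\mathscr{A} \otimes \mathscr{H}$, and of $E \otimes \mathscr{H}$, since fullness of $E$ is the hypothesis.

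For $E \oplus \mathscr{A}$ the ideal generated by the inner products already contains every $a^*b$ with $a,b \in \mathscr{A}$ (coming from the second summand) and hence all of $\mathscr{A}$; so $E \oplus \mathscr{A}$ is full. For the exterior tensor product $\mathscr{A} \otimes \mathscr{H}$, which is a Hilbert module over $\mathscr{A} \otimes \mathbb{C} \cong \mathscr{A}$, the inner product on elementary tensors reads $\langle a \otimes \xi, b \otimes \eta \rangle = \langle \xi, \eta \rangle \, a^*b$; letting $\xi$ be any unit vector in $\mathscr{H}$ and letting $a$ range over an approximate unit of $\mathscr{A}$ we recover every $b \in \mathscr{A}$ in the closed linear span of inner products. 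The same computation, with $a^*b$ replaced by $\langle x, y\rangle_{\mathscr{A}}$, shows that $E \otimes \mathscr{H}$ is full, using the fullness of $E$.

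With fullness established, applying Proposition 8 to the pairs $(E, E \oplus \mathscr{A})$, $(E \oplus \mathscr{A}, \mathscr{A} \otimes \mathscr{H})$ and $(\mathscr{A} \otimes \mathscr{H}, E \otimes \mathscr{H})$ closes the chain. An alternative, slightly more direct route is to combine Corollary 5, which gives $E \backsim_M E \otimes \mathscr{H}$ and $\mathscr{A} \backsim_M \mathscr{A} \otimes \mathscr{H}$, with the single application $E \backsim_M \mathscr{A}$ of Proposition 8 (both being full over $\mathscr{A}$), and to verify $E \oplus \mathscr{A} \backsim_M \mathscr{A}$ by the same Proposition 8 argument; transitivity of $\backsim_M$ then assembles the chain.

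The only step that is not entirely cosmetic is the fullness verification for the exterior tensor products, where one must show that finite sums $\sum_i \langle \xi_i, \eta_i \rangle \, a_i^* b_i$ are dense in $\mathscr{A}$; but this is standard and follows from the density of the algebraic tensor product in the exterior tensor product together with fullness of the first factor. Beyond that, the argument is essentially a transitivity bookkeeping built on Proposition 8 and Corollary 5.
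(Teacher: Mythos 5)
Your argument is correct and is exactly the route the paper intends: the corollary is stated without proof immediately after Proposition~8, and the evident justification is that $E$, $E\oplus\mathscr{A}$, $\mathscr{A}\otimes\mathscr{H}$ and $E\otimes\mathscr{H}$ are all full Hilbert modules over $\mathscr{A}$ (up to the identification $\mathscr{A}\otimes\mathbb{C}\cong\mathscr{A}$), so Proposition~8 (1)$\Leftrightarrow$(3) applies to each adjacent pair. Your fullness checks are the only substantive content, and they are fine.
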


Given a Hilbert $C^*$-module $E$ over $C^*$-algebra $\mathscr{A}$, the
vector space $\mathcal{L}_{\mathscr{A}}\left( \mathscr{A},E\right) $ of all
adjointable module morphisms from $\mathscr{A}$ into $E$ has a canonical
Hilbert $C^*$-module structure over the multiplier algebra $M(\mathscr{A})$
of $\mathscr{A}$, which is called the \textit{multiplier module }of $E$ (see
\cite{8}). The vector space $\mathcal{K}_{\mathscr{A}}(E,\mathscr{A})$ of
all compact operators from $E$ into $\mathscr{A}$ has a natural Hilbert $C^*$%
-module structure over $\mathcal{K}_{\mathscr{A}}(E)$, which is denoted by $%
E^*.$

\begin{theorem}
Let $E$ and $F$ be two Hilbert $C^*$-modules over $C^{\ast }$-algebras $%
\mathscr{A}$ and $\mathscr{B}$, respectively.

\begin{enumerate}
\item If the Hilbert $C^*$-modules $E\otimes \mathbb{K}$ and $F\otimes
\mathbb{K}$ are isomorphic, then $E\backsim _{M}F$.

\item If $E$ and $F$ are full countably generated, $E^*$ and $F^{\ast } $
are countably generated in their corresponding multiplier modules and $%
E\backsim _{M}F$, then the Hilbert $C^*$-modules $E\otimes \mathbb{K}$ and $%
F\otimes \mathbb{K}$ are isomorphic
\end{enumerate}
\end{theorem}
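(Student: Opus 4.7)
For part (1) the plan is to simply chain together the earlier results. The hypothesis gives $E\otimes\mathbb{K}\cong F\otimes\mathbb{K}$, so by Proposition 3, $E\otimes\mathbb{K}\backsim_{M}F\otimes\mathbb{K}$. By Corollary 5, $E\backsim_{M}E\otimes\mathbb{K}$ and $F\backsim_{M}F\otimes\mathbb{K}$. Since $\backsim_{M}$ is an equivalence relation, transitivity yields $E\backsim_{M}F$; this mirrors the proof of Corollary 6 but with $\mathbb{K}$ in place of $\mathscr{H}$.

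For part (2) the plan is to combine Proposition 8 with the Brown--Green--Rieffel stabilization theorem and a Mingo--Phillips-type stabilization theorem for Hilbert $C^*$-modules. First I would verify that the standing countability hypotheses imply $\sigma$-unitality. If $\{e_n\}$ is a countable generating set for $E$, then the finite sums $\sum_{i\le n}\theta_{e_i,e_i}$ form a countable approximate unit for $\mathcal{K}_{\mathscr{A}}(E)$; fullness of $E$ then shows that the inner products $\langle e_i,e_j\rangle$ generate a dense ideal of $\mathscr{A}$, giving a countable approximate unit of $\mathscr{A}$. The same argument, applied to $F$, shows $\mathscr{B}$ and $\mathcal{K}_{\mathscr{B}}(F)$ are $\sigma$-unital.

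Next, Proposition 8 promotes $E\backsim_{M}F$ to a Morita equivalence of $C^*$-algebras $\mathscr{A}\backsim_{M}\mathscr{B}$. Since both are $\sigma$-unital, the Brown--Green--Rieffel theorem supplies a $C^*$-isomorphism $\alpha:\mathscr{A}\otimes\mathbb{K}\to\mathscr{B}\otimes\mathbb{K}$. Under $\alpha$, both $E\otimes\mathbb{K}$ and $F\otimes\mathbb{K}$ become full countably generated Hilbert $C^*$-modules over (identified) $\sigma$-unital base algebras; in particular the isomorphisms $\mathcal{K}_{\mathscr{A}\otimes\mathbb{K}}(E\otimes\mathbb{K})\cong\mathcal{K}_{\mathscr{A}}(E)\otimes\mathbb{K}$ and its $F$-analogue, combined again with Brown--Green--Rieffel applied to $\mathcal{K}_{\mathscr{A}}(E)\backsim_{M}\mathcal{K}_{\mathscr{B}}(F)$, identify the algebras of compact operators of the two stabilized modules.

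Finally I would invoke a Mingo--Phillips stabilization theorem to conclude that $E\otimes\mathbb{K}\cong\mathscr{A}\otimes\mathbb{K}$ and $F\otimes\mathbb{K}\cong\mathscr{B}\otimes\mathbb{K}$ as Hilbert $C^*$-modules. This is precisely the step where the hypothesis on $E^*$ and $F^*$ being countably generated in their multiplier modules enters: together with countable generation and fullness of $E$ and $F$, it forces $E$ to be a genuine imprimitivity bimodule between $\mathcal{K}_{\mathscr{A}}(E)$ and $\mathscr{A}$ that absorbs $\mathbb{K}$ from both sides, which is the input needed to identify $E\otimes\mathbb{K}$ with the standard module $\mathscr{A}\otimes\mathbb{K}$. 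Composing these two isomorphisms with $\alpha$ yields a Hilbert $C^*$-module isomorphism $E\otimes\mathbb{K}\cong F\otimes\mathbb{K}$ (the associated $C^*$-algebra isomorphism being $\alpha$ itself, as required by the definition of a morphism of Hilbert $C^*$-modules). The main obstacle is this last step: ensuring that the stabilization produces a \emph{module} isomorphism compatible with $\alpha$, rather than only an isomorphism of the algebras $\mathcal{K}$, and tracking through the definitions so that the $E^*$-hypothesis is used in the right place.
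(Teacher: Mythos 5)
Your proposal is correct and follows essentially the same route as the paper: part (1) is exactly the paper's combination of Proposition 3 and Corollary 5, and part (2) rests on the same two pillars, namely the Brown--Green--Rieffel stable isomorphism theorem for $\sigma$-unital $C^*$-algebras and the stabilization result $E\otimes \mathbb{K}\cong \mathscr{A}\otimes \mathbb{K}$ of Proposition 3.1 of reference [3], which is precisely where the hypothesis that $E^{*}$ and $F^{*}$ are countably generated in their multiplier modules is consumed. The only cosmetic difference is that you apply Brown--Green--Rieffel to $\mathscr{A}\backsim _{M}\mathscr{B}$ via Proposition 8, whereas the paper applies it to $\mathcal{K}_{\mathscr{A}}(E)\backsim _{M}\mathcal{K}_{\mathscr{B}}(F)$ and then transfers to $\mathscr{A}\otimes \mathbb{K}\cong \mathscr{B}\otimes \mathbb{K}$ by the same stabilization; both variants end with the chain $E\otimes \mathbb{K}\cong \mathscr{A}\otimes \mathbb{K}\cong \mathscr{B}\otimes \mathbb{K}\cong F\otimes \mathbb{K}$.
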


\begin{proof}
$\left( 1\right) $ It follows from Proposition 3 and Corollary 5.

$\left( 2\right) $ If $E\backsim _{M}F$, then $\mathcal{K}_{\mathscr{A}%
}(E)\backsim_{M}\mathcal{K}_{\mathscr{B}}\left( F\right)$. Since $E$ and $F$
are countably generated, the $C^*$-algebras $\mathcal{K}_{\mathscr{A}}(E)$
and $\mathcal{K}_{\mathscr{B}}\left( F\right) $ are $\sigma$-unital, i.e
possess countably approximate unit (see \cite[Proposition 6.7]{5}), and by
\cite[ Theorem 1.2]{2}, the $C^*$ -algebras $\mathcal{K}_{\mathscr{A}%
}(E)\otimes \mathbb{K}$ and $\mathcal{K}_{\mathscr{B}}\left( F\right)
\otimes \mathbb{K}$ are isomorphic.

On the other hand, since $E$ and $E^*$ are countably generated in in their
corresponding multiplier modules, the Hilbert $C^*$-modules $\mathscr{A}%
\otimes \mathbb{K}$ and $E\otimes \mathbb{K}$ are unitarily equivalent (see
\cite[Proposition 3.1]{3}) and then the $C^*$-algebras $\mathscr{A}\otimes
\mathbb{K}$ and $\mathcal{K}_{\mathscr{A}}(E)\otimes \mathbb{K}$ are
isomorphic. In the same manner, we deduce that the $C^*$-algebras $%
\mathscr{B}\otimes \mathbb{K}$ and $\mathcal{K}_{\mathscr{B}}\left( F\right)
\otimes \mathbb{K}$ are isomorphic. Therefore, the $C^*$-algebras $%
\mathscr{A}\otimes \mathbb{K}$ and $\mathscr{B}\otimes \mathbb{K}$ are
isomorphic and so they are isomorphic as Hilbert $C^*$-modules. From these
facts, we conclude that the Hilbert $C^*$-modules $E\otimes \mathbb{K}$ and $%
F\otimes \mathbb{K}$ are isomorphic.
\end{proof}

\begin{remark}
If $\mathscr{A}$ is a $\sigma$-unital $C^*$-algebra, then $\mathscr{A}$ is a
full countably generated Hilbert $C^*$-module over $\mathscr{A}$ and since $%
\mathcal{K}_{\mathscr{A}}(\mathscr{A}, \mathscr{A})$ is isomorphic to $%
\mathscr{A}$, $\mathscr{A}^*$ is countably generated in its multiplier
module. Therefore, Theorem 10 extends \cite[Theorem 1.2]{2}.
\end{remark}

The next result is an extension of \cite[Theorem 1.2]{2}.

\begin{corollary}
Let $E$ and $F$ be two full countably generated Hilbert $C^*$-modules over
commutative $C^*$-algebras $\mathscr{A}$ and $\mathscr{B}$, respectively.
Then $E\backsim _{M}F$ if and only if the Hilbert $C^*$-modules $E\otimes
\mathbb{K}$ and $F\otimes \mathbb{K}$ are isomorphic.
\end{corollary}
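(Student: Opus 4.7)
The whole corollary will be deduced from Theorem 10. The forward implication ``$E\otimes\mathbb{K}\cong F\otimes\mathbb{K}\Rightarrow E\backsim_{M}F$'' is exactly Theorem 10(1) and requires neither the fullness, the countable generation, nor the commutativity hypothesis, so that half of the proof is essentially free.

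For the reverse implication my plan is to apply Theorem 10(2). Three of its four hypotheses are already in hand: $E$ and $F$ are full, they are countably generated, and $E\backsim_{M}F$. What remains is to check the technical condition that $E^{\ast}$ and $F^{\ast}$ are countably generated in their corresponding multiplier modules. As a preliminary observation I would note that countable generation of $E$ makes $\mathcal{K}_{\mathscr{A}}(E)$ $\sigma$-unital, and then $\sigma$-unitality transfers to $\mathscr{A}$ via the Morita equivalence $\mathcal{K}_{\mathscr{A}}(E)\backsim_{M}\mathscr{A}$ supplied by fullness (Proposition 8); thus $\mathscr{A}$ is a commutative $\sigma$-unital $C^{\ast}$-algebra, hence of the form $C_{0}(X)$ for a $\sigma$-compact locally compact Hausdorff space $X$, and similarly $\mathscr{B}\cong C_{0}(Y)$. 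One could additionally use Proposition 8(3) together with the classical fact that Morita equivalent commutative $C^{\ast}$-algebras are $\ast$-isomorphic to conclude $\mathscr{A}\cong\mathscr{B}$, which will be useful bookkeeping at the end.

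The main obstacle — and really the only content beyond invoking Theorem 10 — is the verification that for a full countably generated Hilbert module over a commutative $\sigma$-unital $C^{\ast}$-algebra the dual module is automatically countably generated in its multiplier module. I would tackle this by exploiting the description of $E$ through a continuous field of separable Hilbert spaces over the $\sigma$-compact base $X$, together with a countable approximate unit $\{e_{n}\}$ of $\mathscr{A}=C_{0}(X)$ and a countable generating set $\{\xi_{n}\}$ of $E$, to write down an explicit countable family of rank-one maps $\xi\mapsto e_{m}\langle\xi_{n},\xi\rangle$ in $E^{\ast}$ whose $\mathcal{K}_{\mathscr{A}}(E)$-span is dense in the multiplier module of $E^{\ast}$. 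Once that is in place for both $E$ and $F$, Theorem 10(2) immediately delivers the desired isomorphism $E\otimes\mathbb{K}\cong F\otimes\mathbb{K}$.
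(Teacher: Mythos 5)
Your overall reduction is the right one and matches the paper's: the paper proves this corollary by combining Theorem 10 with \cite[Corollary 3.7]{3}, and that cited corollary supplies precisely the fact you identify as ``the only content beyond invoking Theorem 10'', namely that commutativity forces $E^{*}$ and $F^{*}$ to be countably generated in their multiplier modules. However, as written your argument has one false step and one unproved step, and both sit exactly at the point where the real work is. First, $\sigma$-unitality does \emph{not} transfer across Morita equivalence: $\mathbb{C}$ is Morita equivalent to $\mathcal{K}(\mathscr{H})$ for a nonseparable Hilbert space $\mathscr{H}$, and the latter is not $\sigma$-unital. The correct reason $\mathscr{A}$ is $\sigma$-unital is commutativity itself: for a state $\phi$ of a commutative $C^{*}$-algebra the left kernel is a two-sided ideal, so $N_{\phi}=\{\xi\in E:\phi(\langle\xi,\xi\rangle)=0\}$ is a closed submodule; if $\phi$ annihilated $a=\sum_{n}2^{-n}\langle\xi_{n},\xi_{n}\rangle/(1+\|\xi_{n}\|^{2})$, then $N_{\phi}$ would contain every generator $\xi_{n}$, hence all of $E$, and fullness would force $\phi=0$; thus $a$ is strictly positive. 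This is not pedantry: the same example, with $E=\overline{\mathscr{H}}$ viewed as a full, singly generated Hilbert $\mathcal{K}(\mathscr{H})$-module whose dual $E^{*}\cong\mathscr{H}$ is not countably generated over $\mathcal{K}_{\mathcal{K}(\mathscr{H})}(E)\cong\mathbb{C}$, shows that the conclusion you are trying to establish genuinely fails without commutativity, so any argument that does not use commutativity at this precise point cannot close.

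Second, the density claim for your family $\{\theta_{e_{m},\xi_{n}}\colon\xi\mapsto e_{m}\langle\xi_{n},\xi\rangle\}$ is asserted rather than proved, and the stated target is off: what is needed is that the closed $\mathcal{K}_{\mathscr{A}}(E)$-span of this family is dense in $E^{*}$ itself (which then trivially gives countable generation in the multiplier module); density in $M(E^{*})$ is neither required nor true in general. The verification does go through, and more cheaply than your continuous-field plan suggests: commutativity gives $a^{*}\langle\xi_{n},\xi\rangle=\langle\xi_{n},\xi a^{*}\rangle$, so $\overline{\mathrm{span}}\{\langle\xi_{n},\xi\rangle:n\in\mathbb{N},\,\xi\in E\}=\overline{\mathrm{span}}\,\langle E,E\rangle=\mathscr{A}$ by fullness; since $\theta_{e_{m},\xi_{n}}\theta_{\xi,\eta}=\theta_{e_{m}\langle\xi_{n},\xi\rangle,\eta}$ and $e_{m}c\to c$, the closed $\mathcal{K}_{\mathscr{A}}(E)$-span of your family contains all $\theta_{a,\eta}$ with $a\in\mathscr{A}$, $\eta\in E$, hence equals $\mathcal{K}_{\mathscr{A}}(E,\mathscr{A})=E^{*}$. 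With that computation (or simply with the citation of \cite[Corollary 3.7]{3}, which is all the paper does) supplied, your proof is complete; without it, the proposal is a plan for the crucial lemma rather than a proof of it.
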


\begin{proof}
It follows from Theorem 10 and \cite[Corollary 3.7]{3}.
\end{proof}

\begin{corollary}
Let $\mathscr{A}$ and $\mathscr{B}$ be two $C^*$-algebras. Then $\mathscr{A}$
$\backsim _{M}$ $\mathscr{B}$ if and only if there are two countably
generated full Hilbert $C^*$-modules $E$ and $F$, respectively, over $%
\mathscr{A}$ and $\mathscr{B}$ such that $E^*$and $F^*$ are countably
generated in their corresponding multiplier modules and the Hilbert $C^*$%
-modules $E\otimes \mathbb{K}$ and $F\otimes \mathbb{K}$ are isomorphic.
\end{corollary}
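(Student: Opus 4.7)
The plan is to reduce both implications of the equivalence to the tools already assembled in the paper, namely Theorem 10 and Proposition 8. The $(\Leftarrow)$ direction is immediate: given countably generated full Hilbert modules $E$ over $\mathscr{A}$ and $F$ over $\mathscr{B}$ with $E\otimes\mathbb{K}\cong F\otimes\mathbb{K}$, Theorem 10(1) yields $E\backsim_{M}F$, and then the fullness of $E$ and $F$ together with the equivalence $(1)\Leftrightarrow(3)$ of Proposition 8 produces $\mathscr{A}\backsim_{M}\mathscr{B}$.

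For the $(\Rightarrow)$ direction, the natural move is to try $E:=\mathscr{A}$ and $F:=\mathscr{B}$, each regarded as a Hilbert module over itself via the canonical inner product. Both are full by construction, and $\sigma$-unitality (which Morita equivalence respects, and which the right-hand side of the corollary implicitly requires) makes them countably generated. Because $\mathcal{K}_{\mathscr{A}}(\mathscr{A})\cong\mathscr{A}$ and $\mathcal{K}_{\mathscr{B}}(\mathscr{B})\cong\mathscr{B}$, the hypothesis $\mathscr{A}\backsim_{M}\mathscr{B}$ directly translates into $E\backsim_{M}F$ in the sense of the paper's definition. Once the remaining hypothesis on the duals is checked, Theorem 10(2) delivers the required isomorphism $E\otimes\mathbb{K}\cong F\otimes\mathbb{K}$.

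The last task is to verify that $E^{*}$ and $F^{*}$ are countably generated in their respective multiplier modules. Under the identification $E^{*}=\mathcal{K}_{\mathscr{A}}(\mathscr{A},\mathscr{A})\cong\mathscr{A}$, the multiplier module is $\mathcal{L}_{\mathscr{A}}(\mathscr{A},\mathscr{A})\cong M(\mathscr{A})$, and a single strictly positive element $h\in\mathscr{A}$ (available by $\sigma$-unitality) generates $\mathscr{A}$ inside $M(\mathscr{A})$ since $\overline{h\mathscr{A}}=\mathscr{A}$; the same argument works for $F^{*}$. The only genuine obstacle I anticipate is bookkeeping around $\sigma$-unitality: a full countably generated Hilbert $\mathscr{A}$-module exists only when $\mathscr{A}$ is $\sigma$-unital, so the $(\Rightarrow)$ direction must be read with that standing assumption, after which the proof is a short invocation of Theorem 10(2) and Proposition 8 applied to the canonical choices $E=\mathscr{A}$, $F=\mathscr{B}$.
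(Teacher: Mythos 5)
Your overall architecture is the one the paper intends (the paper supplies no explicit proof of this corollary): the $(\Leftarrow)$ direction via Theorem 10(1) followed by Proposition 8 is correct as written, and for $(\Rightarrow)$ the canonical choice $E=\mathscr{A}$, $F=\mathscr{B}$ together with the observations of Remark 11 and an appeal to Theorem 10(2) is exactly the intended route. Under the additional hypothesis that $\mathscr{A}$ and $\mathscr{B}$ are $\sigma$-unital, your argument is complete.

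The gap lies in how you dispose of that hypothesis: both claims you invoke to justify it are false. First, Morita equivalence does \emph{not} respect $\sigma$-unitality: $\mathbb{C}$ and $\mathcal{K}(\mathscr{H})$ are Morita equivalent for every Hilbert space $\mathscr{H}$, yet $\mathcal{K}(\mathscr{H})$ has no strictly positive element (equivalently, no countable approximate unit) when $\mathscr{H}$ is non-separable. Second, a full countably generated Hilbert module can exist over a non-$\sigma$-unital algebra: for non-separable $\mathscr{H}$, the dual module $\mathscr{H}^{*}$ is a \emph{singly} generated full Hilbert $\mathcal{K}(\mathscr{H})$-module. So the right-hand side of the corollary does not ``implicitly require'' $\sigma$-unitality for the reason you give, and the left-hand side certainly does not force it. Consequently the $(\Rightarrow)$ direction cannot be established for arbitrary $C^{*}$-algebras by your method (Theorem 10(2), resting on the Brown--Green--Rieffel theorem, genuinely needs $\sigma$-unitality), and in fact the statement fails without it: take $\mathscr{A}=\mathcal{K}(\mathscr{H})$ with $\mathscr{H}$ non-separable and $\mathscr{B}=\mathbb{C}$. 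These are Morita equivalent, but an isomorphism of full Hilbert modules $E\otimes \mathbb{K}\cong F\otimes \mathbb{K}$ would induce a $C^{*}$-isomorphism $\mathscr{A}\otimes \mathbb{K}\cong \mathbb{K}$, which is impossible since $\mathcal{K}(\mathscr{H})\otimes \mathbb{K}$ is not $\sigma$-unital while $\mathbb{K}$ is. The honest repair is to add ``$\sigma$-unital'' to the hypotheses of the corollary, as in Theorem 14, rather than to argue that it is already implied.
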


\begin{theorem}
Let $E$ and $F$ be two full countably generated Hilbert $C^*$-modules over $%
\sigma$-unital $C^*$-algebras $\mathscr{A}$ and $\mathscr{B}$, respectively.
Then the following statements are equivalent:

\begin{enumerate}
\item $E$ and $F$ are Morita equivalent;

\item $E\otimes \mathbb{K}$ and $F\otimes \mathbb{K}$ are isomorphic;

\item $\mathcal{K}_{\mathscr{A}}(E)$ and $\mathcal{K}_{\mathscr{B}}(F)$ are
stably isomorphic;

\item $\mathcal{K}_{\mathscr{A}}(E)$ and $\mathcal{K}_{\mathscr{B}}(F)$ are
Morita equivalent;

\item $\mathscr{A}$ and $\mathscr{B}$ are Morita equivalent;

\item $\mathscr{A}$ and $\mathscr{B}$ are stably isomorphic.
\end{enumerate}
\end{theorem}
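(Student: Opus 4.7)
The plan is to establish the six-way equivalence by assembling results already in the paper, together with the Brown--Green--Rieffel stabilisation theorem. I would not try to prove anything in a straight linear cycle $(1)\!\Rightarrow\!(2)\!\Rightarrow\!\cdots$; instead I would group the conditions into three small blocks and connect them.

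First, $(1)\Leftrightarrow(4)$ is immediate from the very definition of $\backsim_M$ for Hilbert $C^*$-modules, so these two are identified at no cost. Since $E$ and $F$ are full, Proposition~8 gives $(1)\Leftrightarrow(5)$, so the triple $(1),(4),(5)$ already collapses to a single condition.

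Next I would use \cite[Theorem~1.2]{2} (the Brown--Green--Rieffel theorem: two $\sigma$-unital $C^*$-algebras are Morita equivalent iff they are stably isomorphic) in two places. Applied to $\mathscr{A}$ and $\mathscr{B}$, which are $\sigma$-unital by hypothesis, it yields $(5)\Leftrightarrow(6)$. Applied to $\mathcal{K}_{\mathscr{A}}(E)$ and $\mathcal{K}_{\mathscr{B}}(F)$, which are $\sigma$-unital by \cite[Proposition~6.7]{5} because $E$ and $F$ are countably generated (exactly the observation already used in the proof of Theorem~10), it yields $(3)\Leftrightarrow(4)$. At this point $(1),(3),(4),(5),(6)$ are all mutually equivalent.

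It remains to splice $(2)$ into the chain, and for this I would invoke Theorem~10. The verification to be made is that the standing hypotheses of the present statement imply those of Theorem~10, i.e.\ that $E^{*}$ and $F^{*}$ are countably generated in their corresponding multiplier modules. This is where I expect the only genuine work to sit: it is not completely formal, but it is forced by the combination $\mathscr{A},\mathscr{B}$ $\sigma$-unital and $E,F$ countably generated, via Kasparov's stabilisation theorem, which realises $E$ as an orthogonally complemented submodule of $\mathscr{A}\otimes\mathscr{H}$ and lets one transport a countable generating set to $E^{*}$ over the $\sigma$-unital algebra $\mathcal{K}_{\mathscr{A}}(E)$; this is the same phenomenon recorded in Remark~11 for $\mathscr{A}$ itself. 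Once that hypothesis is checked, Theorem~10 delivers $(1)\Leftrightarrow(2)$ and the cycle is complete. The main potential obstacle is therefore this technical passage concerning countable generation of the dual module; every other step is a direct appeal to results proved earlier in the paper.
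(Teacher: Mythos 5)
Your proposal is correct and follows essentially the same route as the paper's proof: $(1)\Leftrightarrow(4)$ by Definition 1, $(1)\Leftrightarrow(5)$ by Proposition 8, $(5)\Leftrightarrow(6)$ and $(3)\Leftrightarrow(4)$ by the Brown--Green--Rieffel theorem applied to the $\sigma$-unital algebras $\mathscr{A},\mathscr{B}$ and $\mathcal{K}_{\mathscr{A}}(E),\mathcal{K}_{\mathscr{B}}(F)$ respectively, and $(1)\Leftrightarrow(2)$ via Theorem 10 once $E^{*}$ and $F^{*}$ are seen to be countably generated in their multiplier modules. The paper disposes of that last point in one line (fullness plus $\sigma$-unitality of the base algebras gives countable generation of the duals), whereas you flag it as the only real work and sketch a Kasparov-stabilisation argument, but this is the same step justified slightly differently.
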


\begin{proof}
Since $E$ and $F$ are full Hilbert $C^*$-modules over $\sigma$-unital $C^*$%
-algebras, the Hilbert $C^*$-modules $E^*$ and $F^*$ are countably generated
and so they are countably generated in their corresponding multiplier
modules. Therefore equivalence of (1) and (2) is nothing than Theorem 10.
The equivalence of (5) and (6) is \cite[Theorem 1.2]{2}, and the equivalence
(1) and (5) is Proposition 8. Since $E$ and $F$ are countably generated, the
$C^*$-algebras $\mathcal{K}_{\mathscr{A}}(E)$ and $\mathcal{K}_{\mathscr{B}%
}(F)$ are $\sigma$-unital, and then the equivalence of (3) and (4) is \cite[%
Theorem 1.2]{2}. The equivalence of (1) and (4) is directly deduced from
Definition 1.
\end{proof}

\begin{remark}
By \cite[Theorem 3.5]{SKE1}, two full countably generated Hilbert $C^*$%
-modules $E$ and $F$ over $\sigma$-unital $C^*$-algebras are stably Morita
equivalent in the sense of Skeide if and only if they are modules over
Morita equivalent $C^*$-algebras. So, in this case, the notion of Morita
equivalence introduced in this note coincides with the notion of stable
Morita equivalence introduced by Skeide \cite{SKE11}.
\end{remark}

\begin{corollary}
Let $E$ and $F$ be two full countably generated Hilbert $C^*$-modules over $%
\sigma$-unital $C^*$-algebras $\mathscr{A}$ and $\mathscr{B}$, respectively.
Then $E\ $and $F$ are stably Morita equivalent in the sense of Skeide \cite%
{SKE1} if and only if the Hilbert $C^*$-modules $E\otimes \mathbb{K}$ and $%
F\otimes \mathbb{K}$ are isomorphic.
\end{corollary}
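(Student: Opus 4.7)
The plan is to derive this as an immediate consequence of the two results immediately preceding it, namely Theorem 13 and Remark 14. The statement ultimately asserts the equivalence of two conditions on $E$ and $F$: stable Morita equivalence in Skeide's sense, and the isomorphism $E\otimes\mathbb{K}\cong F\otimes\mathbb{K}$ of Hilbert $C^*$-modules. Under the standing hypotheses (full, countably generated modules over $\sigma$-unital $C^*$-algebras), both conditions have been translated in the surrounding text into the more primitive condition that $\mathscr{A}$ and $\mathscr{B}$ are Morita equivalent as $C^*$-algebras.

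First I would invoke Remark 14, which quotes \cite[Theorem 3.5]{SKE1}: under exactly the hypotheses assumed here, $E$ and $F$ are stably Morita equivalent in the sense of Skeide if and only if $\mathscr{A}$ and $\mathscr{B}$ are Morita equivalent $C^*$-algebras. This converts the left-hand side of the desired biconditional into a statement about the coefficient algebras alone.

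Next I would apply Theorem 13, which provides (under the very same hypotheses) a long list of equivalent conditions, among which are (2) the isomorphism $E\otimes\mathbb{K}\cong F\otimes\mathbb{K}$ and (5) the Morita equivalence of $\mathscr{A}$ and $\mathscr{B}$. The implication $(5)\Leftrightarrow(2)$ therefore gives the remaining half of the translation. Chaining the two equivalences yields the corollary.

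I do not anticipate any real obstacle: the work has already been done in Theorem 10 (which feeds into Theorem 13) and in Skeide's cited theorem, so the proof is essentially a one-line citation. The only mild subtlety worth spelling out is that the $\sigma$-unitality of $\mathscr{A}$ and $\mathscr{B}$ together with countable generation of $E$ and $F$ forces $E^{*}$ and $F^{*}$ to be countably generated in their multiplier modules, which is exactly what Theorem 13 uses to invoke Theorem 10; but this has already been verified in the proof of Theorem 13, so no new verification is required here.
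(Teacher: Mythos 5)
Your proposal is correct and matches the paper's own proof, which simply cites Theorem 14 (your ``Theorem 13'', the chain of equivalences including $(2)\Leftrightarrow(5)$) together with Skeide's Theorem 3.5 as recalled in the preceding remark. Apart from an off-by-one in your references to the numbered statements, the argument is the same one-line chaining of the two equivalences.
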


\begin{proof}
It follows from Theorem 14 and \cite[Theorem 3.5 (2)]{SKE1}.
\end{proof}

\section{Applications}

\bigskip Let $E$ be a full Hilbert $\mathscr{A}$-module and let $G$ be a
locally compact group. A \textit{continuous action} of $G$ on $E$ is a group
homomorphism $\eta $ from $G$ to Aut($E$), the group of all isomorphisms of
Hilbert $C^{\ast }$-modules from $E$ on $E$, such that the map $t\mapsto
\eta _{t}\left( x\right) $ from $G$ to $E$ is continuous for each $x\in E$.
Any continuous action $\eta $ of $G$ on $E$ induces a continuous action $%
\alpha ^{\eta }$ of $G$ on $\mathscr{A}$ by $\alpha _{g}^{\eta }\left(
\left\langle x,y\right\rangle \right) =\left\langle \eta _{g}\left( x\right)
,\eta _{g}\left( y\right) \right\rangle $ for all $x,y\in E$ and $g\in G$.
The linear space $C_{c}(G,E)$ of all continuous functions from $G$ to $E$
with compact support has a pre-Hilbert $G\times _{\alpha ^{\eta }}\mathscr{A}
$-module structure with the action of $G\times _{\alpha ^{\eta }}\mathscr{A}$
on $C_{c}(G,E)$ given by
\begin{equation*}
\left( \widehat{x}f\right) \left( s\right) =_{G}\widehat{x}%
\left( t\right) \alpha _{t}^{\eta }\left( f\left( t^{-1}s\right) \right) dt
\end{equation*}%
for all $\widehat{x}\in C_{c}(G,X)$ and all $f\in C_{c}(G,\mathscr{A})$ and
the inner product given by
\begin{equation*}
\left\langle \widehat{x},\widehat{y}\right\rangle \left( s\right)
=_{G}\alpha _{t^{-1}}^{\eta }\left( \left\langle \widehat{x}(t),%
\widehat{y}\left( ts\right) \right\rangle \right) dt.
\end{equation*}%
The \textit{crossed product of $E$ by $\eta $}, denoted by $G\times _{\eta }E
$, is the Hilbert $G\times _{\alpha ^{\eta }}\mathscr{A}$-module obtained by
the completion of the pre-Hilbert $G\times _{\alpha ^{\eta }}\mathscr{A}$%
-module $C_{c}(G,E)$ (see, for example, \cite{4}).

A continuous action $\eta $ of $G$ on $E$ induces a continuous action $\beta
^{\eta }$ of $G$ on $\mathcal{K}_{\mathscr{A}}\left( E\right) $ given by $%
\beta _{g}^{\eta }\left( \theta _{x,y}\right) =\theta _{\eta _{g}\left(
x\right),\eta _{g}\left( y\right) }$ and a continuous action $\gamma ^{\eta }
$ of $G$ on the linking algebra $\mathcal{L}\left( E\right) $ given by $%
\gamma _{g}^{\eta }\left( \theta _{a\oplus x,b\oplus y}\right) =\theta
_{\alpha _{g}^{\eta }\left( a\right) \oplus \eta _{g}\left( x\right),\alpha
_{g}^{\eta }\left( b\right) \oplus \eta _{g}\left( y\right) }$.

Recall that two continuous actions $\alpha $ and $\beta $ of a locally
compact group $G$, respectively, on the $C^*$-algebras $\mathscr{A}$ and $%
\mathscr{B}$ are Morita equivalent if there is a full Hilbert $C^*$-module $E
$ over $\mathscr{A}$ and a continuous action $\eta $ of $G$ on $E$ such that
the $C^*$-algebras $\mathcal{K}_{\mathscr{A}}\left( E\right) $ and $%
\mathscr{B}$ are isomorphic, $\alpha =\alpha ^{\eta }$ and $\varphi \circ
\beta =\beta ^{\eta }\circ \varphi $, where $\varphi $ is an isomorphism
from $\mathscr{B}$ onto $\mathcal{K}_{\mathscr{A}}\left( E\right) $.

\begin{definition}
Two continuous actions $\eta $ and $\mu $ of a locally compact group $G$ on
the full Hilbert $C^*$-modules $E$ and $F$ over $C^*$-algebras $\mathscr{A}$
and $\mathscr{B}$, respectively, are Morita equivalent if the actions $\beta
^{\eta }$ and $\beta ^{\mu }$ of $G$, respectively, on $\mathcal{K}_{%
\mathscr{A}}\left( E\right)$ and $\mathcal{K}_{\mathscr{B}}\left( F\right)$
are Morita equivalent.
\end{definition}

The following proposition extends \cite[Theorem 1 ]{CMW} and \cite[Theorem,
p.299]{COM} in the context of Hilbert $C^*$-modules.

\begin{proposition}
Let $\eta $ and $\mu $ be two continuous actions of a locally compact group $%
G$ on the full Hilbert $C^*$-modules $E$ and $F$, respectively. If $\eta $
and $\mu $ are Morita equivalent, then the Hilbert $C^*$-modules $G\times
_{\eta }E$ and $G\times _{\mu }F$ are Morita equivalent.
\end{proposition}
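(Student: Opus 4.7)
The plan is to reduce the assertion to the classical Combes and Curto--Muhly--Williams theorem, which says that Morita equivalent $C^*$-dynamical systems have Morita equivalent crossed product $C^*$-algebras, and then to translate the conclusion back into the Hilbert module setting via Definition 1.

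First, I would unpack the hypothesis. By Definition 17, $\eta$ and $\mu$ being Morita equivalent is exactly the statement that the $C^*$-dynamical systems $(\mathcal{K}_{\mathscr{A}}(E),G,\beta^{\eta})$ and $(\mathcal{K}_{\mathscr{B}}(F),G,\beta^{\mu})$ are Morita equivalent in the classical sense. Invoking \cite{COM} (equivalently \cite{CMW}) then gives at once that the crossed product $C^*$-algebras
\[
\mathcal{K}_{\mathscr{A}}(E)\times_{\beta^{\eta}}G \quad\text{and}\quad \mathcal{K}_{\mathscr{B}}(F)\times_{\beta^{\mu}}G
\]
are Morita equivalent.

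The essential remaining step is the $C^*$-algebra identification
\[
\mathcal{K}_{\mathscr{A}\times_{\alpha^{\eta}}G}(G\times_{\eta}E) \;\cong\; \mathcal{K}_{\mathscr{A}}(E)\times_{\beta^{\eta}}G,
\]
together with its analogue for $\mu$ and $F$. I would obtain this via the linking algebra crossed product $\mathcal{L}(E)\times_{\gamma^{\eta}}G$: its upper-left corner is naturally $\mathcal{K}_{\mathscr{A}}(E)\times_{\beta^{\eta}}G$, while the same corner can also be recognized as $\mathcal{K}(G\times_{\eta}E)$ once one checks that the rank-one operators $\theta_{\widehat{x},\widehat{y}}$ with $\widehat{x},\widehat{y}\in C_c(G,E)$ correspond to explicit elements of $C_c(G,\mathcal{K}_{\mathscr{A}}(E))$ and that this assignment preserves products, involutions, and $C^*$-norms. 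I would cite \cite{4} as the standard reference for this identification.

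Combining the two steps yields $\mathcal{K}(G\times_{\eta}E)\backsim_{M}\mathcal{K}(G\times_{\mu}F)$ as $C^*$-algebras, which, by Definition 1, is exactly $G\times_{\eta}E\backsim_{M}G\times_{\mu}F$. The main obstacle is the identification in the second step: although the $C^*$-algebra-level result is classical, writing the isomorphism explicitly on the dense $C_c$-subalgebras and verifying that it extends by continuity to the full crossed product is the technical heart of the argument, and is where one must carefully use that $E$ is full (so that $G\times_{\eta}E$ is a full Hilbert $\mathscr{A}\times_{\alpha^{\eta}}G$-module) and that $\beta^{\eta}$ is the canonical lift of $\eta$ to $\mathcal{K}_{\mathscr{A}}(E)$.
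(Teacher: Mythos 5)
Your proposal is correct, and it follows the same basic strategy as the paper: invoke the Combes/Curto--Muhly--Williams theorem for Morita equivalent actions and then identify a crossed product of operator algebras with the operator algebra of the crossed product module. The one structural difference is where you apply Combes' theorem. The paper first passes from the actions $\beta^{\eta},\beta^{\mu}$ to the induced actions $\gamma^{\eta},\gamma^{\mu}$ on the linking algebras (these are Morita equivalent to the original ones by Combes, and Morita equivalence of actions is transitive), applies the crossed-product theorem at the linking-algebra level to get $G\times_{\gamma^{\eta}}\mathcal{L}(E)\backsim_{M}G\times_{\gamma^{\mu}}\mathcal{L}(F)$, uses the isomorphism $\mathcal{L}(G\times_{\eta}E)\cong G\times_{\gamma^{\eta}}\mathcal{L}(E)$ from the proof of Theorem 4.1 of Echterhoff--Kaliszewski--Quigg--Raeburn, and finally descends to the modules via Proposition 8 (the equivalence of (1) and (2) there, which uses fullness). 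You instead apply Combes directly to $\beta^{\eta},\beta^{\mu}$ and then need the corner identification $\mathcal{K}(G\times_{\eta}E)\cong\mathcal{K}_{\mathscr{A}}(E)\times_{\beta^{\eta}}G$; this is exactly the piece you correctly flag as the technical heart, and your plan to extract it as the upper-left corner of the linking-algebra isomorphism $\mathcal{L}(G\times_{\eta}E)\cong G\times_{\gamma^{\eta}}\mathcal{L}(E)$ is sound, since $\gamma^{\eta}$ preserves the decomposition $E\oplus\mathscr{A}$ and hence the corner projections survive the crossed-product construction. The paper's detour through linking algebras is precisely a device to avoid having to justify that corner identification separately; your route is marginally more direct but carries that extra verification, which you have acknowledged and sketched adequately.
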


\begin{proof}
If the actions $\eta $ and $\mu $ are Morita equivalent, then the actions $%
\gamma ^{\eta }$ and $\gamma ^{\mu }$ are Morita equivalent, since the
actions $\beta ^{\eta }$ and $\gamma ^{\eta }\ $are Morita equivalent, the
actions $\beta ^{\mu }$ and $\gamma ^{\mu }$ are Morita equivalent (see \cite%
[p. 297]{COM}) and the relation of Morita equivalence of actions of groups
on $C^*$-algebras is an equivalence relation. Then, by \cite[Theorem, p. 299]%
{COM}, the $C^*$-algebras $G\times _{\gamma ^{\eta }}\mathcal{L} (E) $ and $%
G\times _{\gamma ^{\mu }}\mathcal{L}(F)$ are Morita equivalent. On the other
hand, the $C^*$-algebras $G\times _{\gamma ^{\eta }} \mathcal{L}(E)$ and $%
\mathcal{L}(G\times _{\eta }E)$ are isomorphic as well as the $C^*$-algebras
$G\times _{\gamma ^{\mu }}\mathcal{L}(F)\ $and $\mathcal{L}(G\times _{\mu }F)
$ (see the proof of \cite[Theorem 4.1]{EKQR}). Hence the $C^*$-algebras $%
\mathcal{L}(G\times _{\eta }E)$ and $\mathcal{L}(G\times _{\mu }F)$ are
Morita equivalent. From these facts and Proposition 8, we conclude that the
Hilbert $C^*$-modules $G\times _{\eta }E$ and $G\times _{\mu }F$ are Morita
equivalent.
\end{proof}

The vector space $C_{0}\left( G,E\right) $ of all continuous functions
vanishing at infinity from $G$ to $E$ has a canonical Hilbert $C^*$-module
structure over $C^*$-algebra $C_{0}(G,$ $\mathscr{A})$, which can be
identified to $C_{0}(G)\otimes \mathscr{A}$. Moreover, $C_{0}\left(
G,E\right) $ is full and can be identified to $C_{0}(G)\otimes E.$

The following theorem extends Green's theorem to the context of Hilbert $C^*$%
-modules.

\begin{theorem}
Let $G$ be a locally compact group, $G_{0}$ a closed subgroup of $G$, $E$ a
full Hilbert $\mathscr{A}$-module, $\eta $ a continuous action of $G$ on $E$
and $\sigma ^{\eta }$ a continuous action of $G$ on the Hilbert $%
C_{0}(G/G_{0},\mathscr{A})$-module $C_{0}(G/G_{0},E)$ defined by
\begin{equation*}
\sigma _{t}^{\eta }(f)(sG_{0})=\eta _{t}(f(t^{-1}sG_{0})).
\end{equation*}%
Then the Hilbert $C^*$-modules $G\times _{\sigma ^{\eta }}C_{0}(G/G_{0},E)$
and $G_{0}\times _{\eta |_{G_{0}}}E$ are Morita equivalent.
\end{theorem}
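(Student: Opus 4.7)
The strategy is to invoke Proposition~8 and thereby reduce the assertion to Morita equivalence of the underlying coefficient $C^*$-algebras of the two crossed product modules, to which the classical Green imprimitivity theorem can then be applied. The first step is to identify these coefficient algebras precisely.

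For a continuous action $\eta$ of $G$ on a full Hilbert $\mathscr{A}$-module $E$, the crossed product $G\times_\eta E$ is a full Hilbert $C^*$-module over $G\times_{\alpha^\eta}\mathscr{A}$; fullness follows because $E$ is full and $C_c(G,E)$ spans a dense submodule whose inner products exhaust $C_c(G,\mathscr{A})$. A direct computation from the definitions shows that the action of $G$ on the coefficient algebra $C_0(G/G_0,\mathscr{A})$ induced by $\sigma^\eta$ is precisely the analogous action $\sigma^{\alpha^\eta}$ given by $\sigma_t^{\alpha^\eta}(f)(sG_0)=\alpha_t^\eta(f(t^{-1}sG_0))$. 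Consequently $G\times_{\sigma^\eta}C_0(G/G_0,E)$ is a full Hilbert $C^*$-module over $G\times_{\sigma^{\alpha^\eta}}C_0(G/G_0,\mathscr{A})$, while $G_0\times_{\eta|_{G_0}}E$ is a full Hilbert $C^*$-module over $G_0\times_{\alpha^\eta|_{G_0}}\mathscr{A}$.

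Applying the classical Green imprimitivity theorem to the action $\alpha^\eta$ of $G$ on the $C^*$-algebra $\mathscr{A}$ and the closed subgroup $G_0$, one obtains that the two coefficient $C^*$-algebras $G\times_{\sigma^{\alpha^\eta}}C_0(G/G_0,\mathscr{A})$ and $G_0\times_{\alpha^\eta|_{G_0}}\mathscr{A}$ are Morita equivalent. Invoking Proposition~8 for the two full Hilbert $C^*$-modules at hand, this Morita equivalence at the level of coefficient algebras propagates to the desired Morita equivalence $G\times_{\sigma^\eta}C_0(G/G_0,E)\backsim_M G_0\times_{\eta|_{G_0}}E$.

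The main obstacle is the bookkeeping required to confirm that the induced action on the coefficient algebra of $C_0(G/G_0,E)$ is indeed $\sigma^{\alpha^\eta}$, and that both crossed-product Hilbert modules are full so that Proposition~8 applies. Both verifications are essentially routine: the first reduces to the identity $\langle\sigma_t^\eta(f_1),\sigma_t^\eta(f_2)\rangle(sG_0)=\alpha_t^\eta(\langle f_1,f_2\rangle(t^{-1}sG_0))$, obtained by computing with the pointwise inner product on $C_0(G/G_0,E)$; the second follows because fullness of $E$ over $\mathscr{A}$ gives fullness of $C_0(G/G_0,E)$ over $C_0(G/G_0,\mathscr{A})$, which in turn yields fullness of the crossed product after passing to $C_c(G,\,\cdot\,)$ and completing. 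Once these two points are settled, the proof reduces to a direct citation chain through Proposition~8 and Green's theorem.
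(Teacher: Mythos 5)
Your proof is correct, but it takes a genuinely different route from the paper's. You reduce everything to the coefficient algebras: you identify the coefficient algebra of $G\times _{\sigma ^{\eta }}C_{0}(G/G_{0},E)$ as $G\times _{\sigma ^{\alpha ^{\eta }}}C_{0}(G/G_{0},\mathscr{A})$ (via the computation $\alpha ^{\sigma ^{\eta }}=\sigma ^{\alpha ^{\eta }}$) and that of $G_{0}\times _{\eta |_{G_{0}}}E$ as $G_{0}\times _{\alpha ^{\eta }|_{G_{0}}}\mathscr{A}$, apply classical Green imprimitivity to the action $\alpha ^{\eta }$ of $G$ on $\mathscr{A}$, and conclude by the equivalence $(1)\Leftrightarrow (3)$ of Proposition~8. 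The paper instead works with the linking algebras: it identifies $\mathcal{L}(C_{0}(G/G_{0},E))$ with $C_{0}(G/G_{0},\mathcal{L}(E))$, uses the isomorphisms $\mathcal{L}(G\times _{\sigma ^{\eta }}C_{0}(G/G_{0},E))\cong G\times _{\sigma ^{\gamma ^{\eta }}}C_{0}(G/G_{0},\mathcal{L}(E))$ and $\mathcal{L}(G_{0}\times _{\eta |_{G_{0}}}E)\cong G_{0}\times _{\gamma ^{\eta }|_{G_{0}}}\mathcal{L}(E)$ from the proof of the Echterhoff--Kaliszewski--Quigg--Raeburn naturality theorem, applies Green's theorem to the action $\gamma ^{\eta }$ on $\mathcal{L}(E)$, and concludes by the equivalence $(1)\Leftrightarrow (2)$ of Proposition~8. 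Your version is more direct and avoids the crossed-product-of-linking-algebra identifications, at the cost of having to verify two points the linking-algebra route absorbs into cited machinery: that the induced action on the coefficient algebra is $\sigma ^{\alpha ^{\eta }}$ (your pointwise inner-product computation is the right one) and that both crossed-product modules are full over their coefficient algebras so that Proposition~8 applies. That last fullness claim is the only place where your argument is asserted rather than proved in detail --- the closed span of $\{\langle \widehat{x},\widehat{y}\rangle \}$ is a self-adjoint two-sided ideal of the crossed product which one checks is dense when $E$ is full --- but the paper relies on the same kind of fact implicitly, so this is a matter of supplying a routine lemma or a reference (e.g.\ to Kusuda's construction), not a gap in the strategy.
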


\begin{proof}
Since the linking algebra of $C_{0}(G/G_{0},E)$ can be identified to $%
C_{0}(G/G_{0},\mathcal{L}(E))$, the action of $G$ on $\mathcal{L}%
(C_{0}(G/G_{0},E))$ induced by $\sigma ^{\eta }$ can be identified to the
action $\sigma ^{\gamma ^{\eta }}$ of $G$ on $C_{0}(G/G_{0},\mathcal{L}(E))$
given by $\sigma _{g}^{\gamma ^{\eta }}\left( f\right) \left( sG_{0}\right)
=\gamma _{g}^{\eta }\left( f\left( g^{-1}sG_{0}\right) \right) $. Therefore
the $C^*$-algebras $\mathcal{L}(G\times _{\sigma ^{\eta }}C_{0}(G/G_{0},E))$
and $G\times _{\sigma ^{\gamma ^{\eta }}}C_{0}(G/G_{0},\mathcal{L(}E))$ are
isomorphic (see the proof of \cite[Theorem 4.1]{EKQR}).

Clearly $\gamma ^{\eta }|_{G_{0}}=\gamma ^{\eta |_{G_{0}}}$, so , by \cite[%
Theorem 4.21]{RW}, the $C^*$-algebra $G_{0}\times _{\gamma ^{\eta }|_{G_{0}}}%
\mathcal{L}\left( E\right) $ $\ $is Morita equivalent to the $C^*$-algebra $%
G\times _{\sigma ^{\gamma ^{\eta }}}C_{0}(G/G_{0},\mathcal{L}\left( E\right)
)$ and is isomorphic to $\mathcal{L}\left( G_{0}\times _{\eta
|_{G_{0}}}E\right)$ (see the proof of \cite[Theorem 4.1]{EKQR}). Hence the $%
C^*$-algebras $G\times _{\sigma ^{\gamma ^{\eta }}}C_{0}(G/G_{0},\mathcal{L}%
\left( E\right) )$ and $\mathcal{L}\left( G_{0}\times _{\eta
|_{G_{0}}}E\right) $ are Morita equivalent. We have therefore showed
that the $C^*$-algebras $\mathcal{L}(G\times _{\sigma ^{\eta
}}C_{0}(G/G_{0},E))$ and $\mathcal{L}\left( G_{0}\times _{\eta
|_{G_{0}}}E\right) $ are Morita equivalent, whence, by Proposition
8, $G\times _{\sigma ^{\eta }}C_{0}(G/G_{0},E)$ $\backsim
_{M}G_{0}\times _{\eta |_{G_{0}}}E$.
\end{proof}

\end{document}